	\numberwithin{equation}{section}
	\theoremstyle{plain}
		\newtheorem{thm}{Theorem}[section]
		\newtheorem{prop}[thm]{Proposition}
		\newtheorem{conv}[thm]{Convention}
	\theoremstyle{definition}
		\newtheorem{defn}[thm]{Definition}
		\newtheorem{example}[thm]{Example}
	\theoremstyle{remark}
		\newtheorem{rem}[thm]{Remark}
		\newcommand\DrawBox[3][]{%
			\begin{tikzpicture}[remember picture,overlay]
			\draw[overlay,fill=gray!30,#1]
				([xshift=-3.7em,yshift=2.1ex]{pic cs:#2})
				rectangle
				([xshift=2pt,yshift=-0.7ex]pic cs:#3);
			\end{tikzpicture}%
		}
	\algnewcommand\algorithmicinput{\textbf{Input:}}
	\algnewcommand\INPUT{\item[\algorithmicinput]}
	\algnewcommand\algorithmicoutput{\textbf{Output:}}
	\algnewcommand\OUTPUT{\item[\algorithmicoutput]}
\DeclareFixedFont{\ttb}{T1}{txtt}{bx}{n}{12} 
\DeclareFixedFont{\ttm}{T1}{txtt}{m}{n}{12}  
\definecolor{deepblue}{rgb}{0,0,0.5}
\definecolor{deepred}{rgb}{0.6,0,0}
\definecolor{deepgreen}{rgb}{0,0.5,0}
\newcommand\pythonstyle{\lstset{
language=Python,
basicstyle=\ttm,
otherkeywords={self},             
keywordstyle=\ttb\color{deepblue},
emph={MyClass,__init__},          
emphstyle=\ttb\color{deepred},    
stringstyle=\color{deepgreen},
frame=tb,                         
showstringspaces=false, 
tabsize=2         %
}}
\newcommand\pythoninline[1]{{\pythonstyle\lstinline!#1!}}
\newcommand{\N}{\mathbb{N}}
\newcommand{\R}{\mathbb{R}}
\newcommand{\GL}{\mathrm{GL}}
\newcommand{\Mat}{\mathrm{Mat}}
\newcommand{\dom}{\mathrm{dom}}
\newcommand{\abs}[1]{\left\lvert #1 \right\rvert} 
\newcommand{\trasp}[1]{#1^{\mathsf{T}}} 
\renewcommand{\leq}{\leqslant}
\renewcommand{\geq}{\geqslant}
\renewcommand{\=}{\coloneqq}	 
\newcommand{\ie}{i.\,e.~}
\title{A mathematical model for voice leading\\ and its complexity}
\author{Mattia G.~Bergomi, Riccardo D.~Jadanza, Alessandro Portaluri}
\begin{document}

\maketitle

\begin{abstract}
We give a formal definition of the musical concept of voice leading in mathematical terms, based on the idea of partial permutations of certain ordered multisets.
Then we associate a partial permutation matrix with a voice leading in a unique way and write an algorithm to easily transform any musical composition into a sequence of such matrices; we then generalise it in order
to include in the model also rhythmic independence and rests.
From that we extract a vector whose components return information about the movements of the voices in the piece and hence about the complexity of the voice leading.
We provide some examples by analysing three compositions, also visualising complexity as a point cloud for each piece.
Finally, we interpret the sequence of complexity vectors associated with each composition, thus considering the position of each observation with respect to time. The Dynamic Time Warping allows us to compute the distance between two pieces and to show that our approach distinguishes well the examples that we took into account, exhibiting a strong indication that the notion of complexity we propose is a good tool to identify and classify musical pieces.
\end{abstract}

\section{Introduction} \label{sec:intro}

In Music, the study of harmony and voice leading concerns the dynamical relation among chords and melodic lines.
Chords can be seen as vertical entities supporting a main melodic line, whereas compositional styles can be characterised by the interplay of the various voice lines in horizontal motion. Hence, voice leading can be
interpreted as the writing of several melodies interacting together in two ways: voices moving simultaneously affect the listener as chords, whilst the independence in terms of both rhythm and pitch creates a horizontal flow,
perceived as a superposition of different themes.

The concurrent motion of two voices is classically referred to as \emph{contrapuntal motion} and is traditionally divided into four classes, illustrated in Figure~\vref{fig:motions}.
In \emph{contrary motion} the voices move in opposite directions; this gives them contrast and independence \cite[Chapter 6]{aldwell2010harmony}, creating an interesting ``soundscape'' for the listener, as the Canadian
composer Raymond M.~Schafer calls it.
\emph{Parallel motion} occurs when the interval between the voices is kept constant along the movement; when applied to thirds, sixths and tenths it can be considered among the most powerful voice leading techniques.
However, in some cases it impedes the growth in independence of the voices: this is one reason why it is generally avoided when not even forbidden for unisons, octaves and fifths.
In \emph{oblique motion} actually only one voice is moving whilst the other remains at the same pitch; this one and \emph{similar motion}, where the two voices proceed in the same direction, convey surely less
independence than contrary motion, but undoubtably more than the parallel one.

For practical purposes, voices can be grouped in ranges: from the highest to the lowest, we have:
\begin{align*}
	\text{Soprano}&: \quad \text{from $C_4$ to $G_6$}, \\
	\text{Alto}&: \quad \text{from $G_3$ to $C_5$}, \\
	\text{Tenor}&: \quad \text{from $C_3$ to $G_4$}, \\
	\text{Bass}&: \quad \text{from $E_2$ to $C_4$}.
\end{align*}

\noindent
Different compositional styles are characterised by different types of motion: the five species of counterpoint represented in Figure~\ref{fig:species} arise from various combinations of rhythmic choices and of the amplitudes
of the intervals between two consecutive notes.
A particular movement of voices is the \emph{(voice) crossing}, that occurs when two voices exchange their relative positions --- for instance when the Soprano moves below the Alto.
This kind of dynamics is generally not desirable because it conveys a sense of discomfort, albeit it is considered less problematic when it involves inner voices (Alto and Tenor) for a few chords.

\begin{figure}[tb]
\centering
\includegraphics[width=\textwidth]{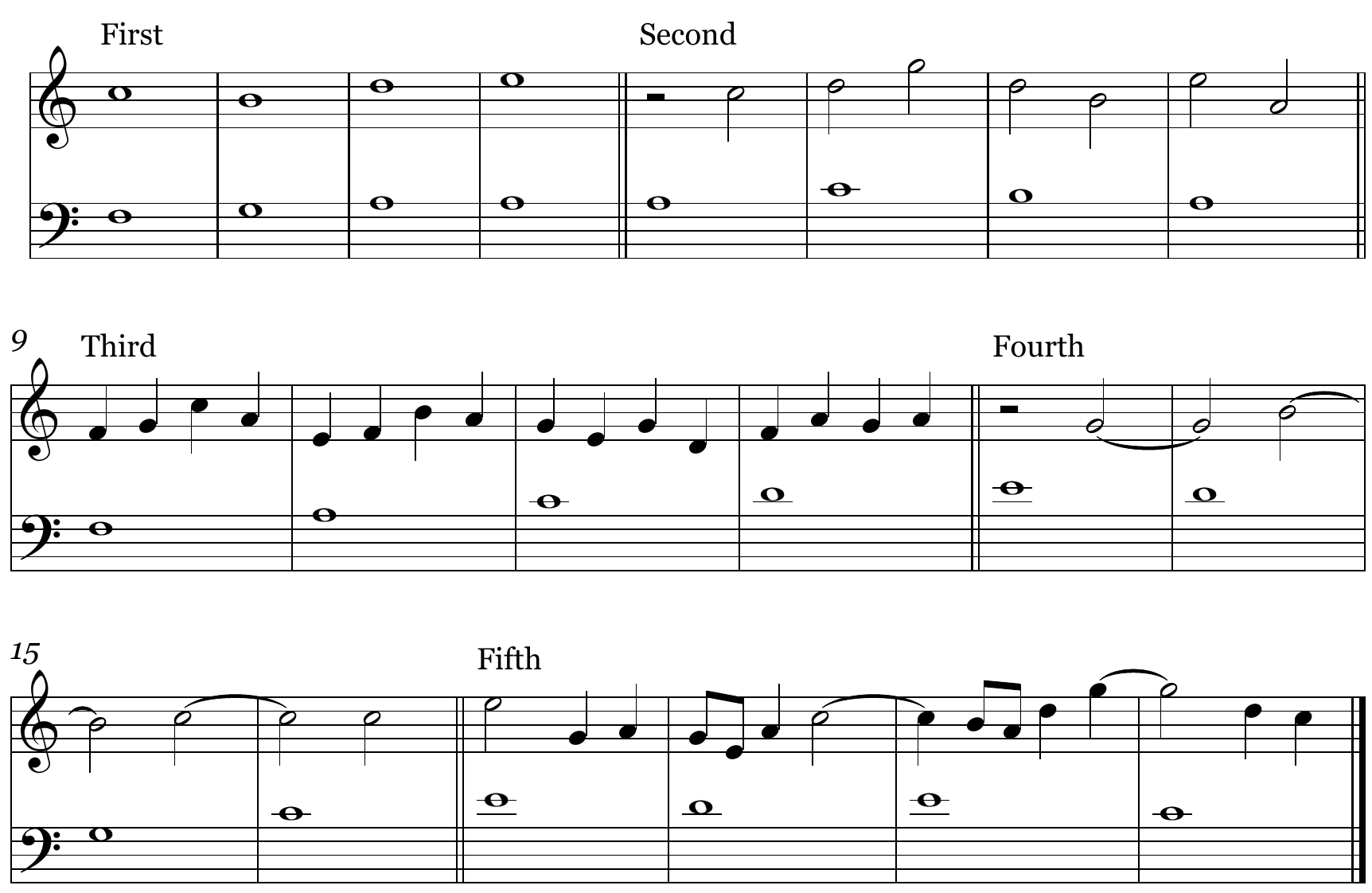}
\caption{Five different degrees of independence between voices, from the tied one-to-one in the first species of counterpoint to the complete independence of the fifth species.} \label{fig:species}
\end{figure}

Given a sequence of chords, an important question is how to transform them into a superposition of voices according to a certain contrapuntal style (see Figure~\ref{fig:chordmel}).
Our aim is to formalise the concept of voice leading in mathematical terms and to build a computationally efficient model for dealing with voice leading based on sparse matrices that encode the motions of the involved voices.
This representation allows us to define a notion of \emph{complexity} of the voice leading and to classify different contrapuntal styles, by representing an entire composition as a static point cloud or as a
multi-dimensional time series. This second interpretation offers also the possibility to compare two pieces by making use of the \emph{dynamic time warping}, a tool that measures the similarity between them in terms of
complexity.

\begin{figure}[tb]
\centering
\includegraphics[width=\textwidth]{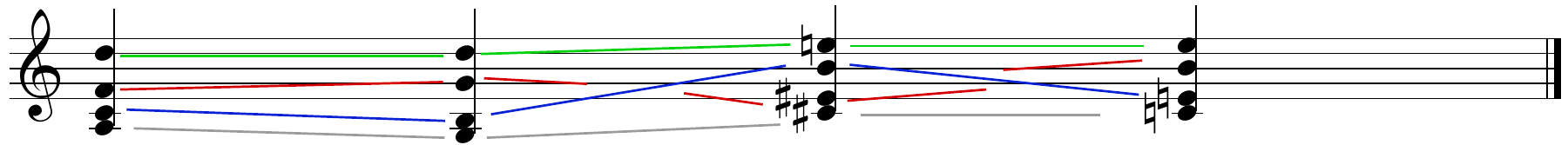}
\caption{From a sequence of chords to a superposition of melodies.} \label{fig:chordmel}
\end{figure}

\section{Voice leadings, multisets and partial permutations}

In general, it is possible to describe a melody as a finite sequence of ordered pairs $(p_i, p_{i+1})_{i \in I}$, where $I$ is a finite set of indices. In order to model the voice leading in a mathematical way it is necessary to
introduce first the concept of \emph{multiset}, a generalisation of the idea of set. (This approach was already considered by D.~Tymoczko in \cite{Tym06}.)
Roughly speaking, we can think of it as of a list where an object can appear more than once, whilst the elements of a set are necessarily unique.
More formally, a \emph{multiset} $M$ is a couple $(X, \mu)$ composed of an \emph{underlying set} $X$ and a map $\mu : X \to \N $, called the \emph{multiplicity} of $M$, such that for every $x \in X$ the value $\mu(x)$ is
the number of times that $x$ appears in $M$. We define the \emph{cardinality} $\abs{M}$ of $M$ to be the sum of the multiplicities of each element of its underlying set $X$.
Observe, however, that a multiset is in fact completely defined by its multiplicity function: it suffices to set $M \= \big( \dom(\mu), \mu \big)$.

If we interpret a set of $n$ singing voices (or parts played by $n$ instruments, or both) as a multiset of pitches of cardinality $n$, then a voice leading can be mathematically described as follows.

\begin{defn}
Let $M \= (X_M, \mu_M)$ and $L \= (X_L, \mu_L)$ be two multisets of pitches with same cardinality $n$ and arrange their elements into $n$-tuples $(x_1, \dotsc, x_n)$ and $(y_1, \dotsc, y_n)$ respectively.%
\footnote{These are in fact the images of two bijective maps $\psi_M : \{ 1, \dotsc, n \} \to M$ and $\psi_L : \{ 1, \dotsc, n \} \to L$.}
A \emph{voice leading} of $n$ voices between $M$ and $L$, denoted by $(x_1, \dotsc, x_n) \to (y_1, \dotsc, y_n)$, is the multiset
\[
	Z \= \big\{ (x_1, y_1), \dotsc, (x_n, y_n) \big\},
\]
whose underlying set is $X_Z \= X_M \times X_L$ and whose multiplicity function $\mu_Z$ is defined accordingly, by counting the occurrences of each ordered pair.
\end{defn}

\begin{rem}
Observe that the definition just given is not linked to the particular type of object (pitches): it is possible to describe voice leadings also between pitch classes, for instance.
\end{rem}

Note that it is also possible to describe a voice leading as a bijective map from the multiset $M$ to the multiset $L$, \ie as a partial permutation of the \emph{union multiset} 		
\[
	M \cup L \= (X_M \cup X_L, \mu_{M \cup L}),
\]
where
\[
	\mu_{M \cup L} \= \max \{ \mu_M \chi_M, \mu_L \chi_L\}
\]
and $\chi_M$ and $\chi_L$ are the characteristic functions of $X_M$ and $X_L$, respectively.%
\footnote{For a multiset $S$ we assume that $\mu_S(x) = 0$ if $x \notin X_S$. With this understanding, the function $\mu_{M \cup L}$ is defined on the whole of $X_M \cup X_L$.}
We recall that a \emph{partial permutation} of a finite multiset $S$ is a bijection between two subsets of $S$. In general, if $S$ has cardinality $n$ then this map can be represented as an $n$-tuple of symbols,
some of which are elements of $S$ and some others are indicated by a special symbol --- we use $\diamond$ --- to be interpreted as a ``hole'' or an ``empty character''.
However, since we are not dealing with subsets of a fixed multiset, we shall use the cycle notation to avoid ambiguity and confusion.

\begin{rem}
In order to be able to do computations with partial permutations, it is fundamental to fix an \emph{ordering} among the elements of the union multiset $M \cup L$.
We henceforth give $M \cup L$ the natural ordering $\leq$ of real numbers, being its elements pitches. Indeed, in classical music with equal temperament, one defines the pitch $p$ of a note as a function of the fundamental
frequency $\nu$ (measured in Hertz) associated with the sound; more precisely, as the map $p : (0, +\infty) \to \R$ given by
\[
	p(\nu) \= 69 + 12 \log_2\left( \frac{\nu}{440} \right).
\]
This can be done also in the case where the elements of the union multiset are pitch classes: the ordering is induced by the ordering of their representatives belonging to a same octave. However, in this paper we shall not
follow this practice and shall instead restrict to pitches only.
\end{rem}

\begin{example}
The voice leading 
\begin{equation} \label{vl:example}
	(G_2, G_3, B_3, D_4, F_4) \to (C_3, G_3, C_4, C_4, E_4)
\end{equation}
is described by the partial permutation of the ordered union multiset 
\[
	\left(G_2, C_3, G_3, B_3, C_4, C_4, D_4, E_4, F_4\right)
\]
defined by
\begin{equation} \label{eq:pperm}
	\begin{pmatrix}
		 G_2 & C_3 & G_3 & B_3 & C_4 & C_4 & D_4 & E_4 & F_4 \\
		 C_3 & \diamond & G_3 & C_4 & \diamond & \diamond & C_4 & \diamond & E_4
	\end{pmatrix}.
\end{equation}
\end{example}

\noindent
Thus, a voice leading between two multisets of $n$ voices can be seen as a partial permutation of a multiset whose cardinality is less than or equal to $2n$.

The next step is to associate a representation matrix with the partial permutation.
Let $V$ be an $n$-dimensional vector space over a field $\mathbb{F}$ and let $\mathcal{E} \= \{e_1, \dotsc, e_n\}$ be a basis for $V$. 
The symmetric group $S_n$ acts on $\mathcal{E}$ by permuting its elements: the corresponding map $S_n \times \mathcal{E} \to \mathcal{E}$ assigns $(\sigma, e_i) \mapsto e_{\sigma(i)}$
for every $i \in \{1, \dotsc, n\}$. We consider the well-known \emph{linear representation} $\rho : S_n \to \GL(n, \mathbb{F})$ of the group $S_n$ given by
\[
\rho(1\ i) \= \left( \begin{smallmatrix}
          		0 & & & & 1 & & & \\
			   & 1 & & & & & & \\
			   & & \ddots & & & & & \\
			   & & & 1 & & & & \\
			1 & & & & 0 & & & \\
			   & & & & & 1 & & \\
			   & & & & & & \ddots & \\
			   & & & & & & & 1
		\end{smallmatrix} \right),
\]
where the $1$'s in the first row and in the first column occupy the positions $1, i$ and $i, 1$ respectively.
The map $\rho$ sends each $2$-cycle of the form $(1\ i)$ to the corresponding permutation matrix that swaps the first element of the basis $\mathcal{E}$ for the $i$-th one.
Note that each row and each column of a permutation matrix contains exactly one $1$ and all its other entries are $0$.
Following this idea and \cite[Definition~3.2.5, p.~165]{HJ91}, we say that a matrix $P \in \Mat(m, \R)$ is a \emph{partial permutation matrix} if for any row and any column there is at most one non-zero element
(equal to $1$). When dealing with a voice leading $M \to L$, the dimension $m$ of the matrix $P$ is equal to the cardinality of the multiset $M \cup L$.

\begin{rem}
In general, the partial permutation matrix associated with a given voice leading is not unique. This is due to the fact that we are dealing with multisets: if $M \to L$ is a voice leading it is possible that some components of
$L$ have the same value, \ie that different voices are playing or singing the same note.
\end{rem}

\noindent
For this reason we introduce the following convention.

\begin{conv} \label{fact:convention}
Let $M \= (x_1, \dotsc, x_n) \to L \= (y_1, \dotsc, y_n)$ be a voice leading and suppose that more than one voice is associated with a same note of $L$. To this end, let $(x_{i_1}, \dotsc, x_{i_k})$ be the pitches of $M$
(with $i_1< \dotsb < i_k$) that are mapped to the pitches $(y_{j_1}, \dotsc, y_{j_k})$ of $L$, with $y_{j_1} = \dotsb = y_{j_k}$ and $j_1 < \dotsb < j_k$.
In order to uniquely associate a partial permutation matrix $P \= (a_{ij})$ with the above voice leading, we assign the value $1$ to the corresponding entries of $P$ by following the order of the indices,
that is by setting $a_{i_1 j_1} = 1, \dotsc, a_{i_k j_k} = 1$.
\end{conv}

\noindent
Thus, we shall henceforth speak of \emph{the} partial permutation matrix associated with a given voice leading.

\begin{example}
The partial permutation matrix associated with the cycle representation \eqref{eq:pperm} of voice leading~\eqref{vl:example} is
\[
\begin{pmatrix}
	0 & 1 & 0 & 0 & 0 & 0 & 0 & 0 & 0 \\
	0 & 0 & 0 & 0 & 0 & 0 & 0 & 0 & 0 \\
	0 & 0 & 1 & 0 & 0 & 0 & 0 & 0 & 0 \\
	0 & 0 & 0 & 0 & 1 & 0 & 0 & 0 & 0 \\
	0 & 0 & 0 & 0 & 0 & 0 & 0 & 0 & 0 \\
	0 & 0 & 0 & 0 & 0 & 0 & 0 & 0 & 0 \\
	0 & 0 & 0 & 0 & 0 & 1 & 0 & 0 & 0 \\
	0 & 0 & 0 & 0 & 0 & 0 & 0 & 0 & 0 \\
	0 & 0 & 0 & 0 & 0 & 0 & 0 & 1 & 0
\end{pmatrix}.
\]
\end{example}

\noindent
Therefore, if $M \to L$ is a voice leading, if both $M$ and $L$ are thought of as ordered tuples and if $P$ is its partial permutation matrix, we have that $PM = L$; in addition, the ``reversed'' voice leading $L \to M$ is
obviously described by the transpose $\trasp{P}$ of $P$: $\trasp{P}L = M$.

This representation has the advantage of providing objects that are much handier than a multiset of couples, speaking in computational terms.
Algorithm~\vref{alg:ppmat} presents the pseudocode for the computation of the partial permutation matrix of a voice leading. 

\begin{algorithm}
\caption{Computing the partial permutation matrix} \label{alg:ppmat}
\DrawBox{c}{d}
\begin{algorithmic}[1]
\INPUT\tikzmark{c} 
\Statex $M \to L$ \Comment{Source ($M$) and target ($L$) multisets describing the voice leading}
\OUTPUT
\Statex $P$ \Comment{Partial permutation matrix associated with the voice leading\;\;\,\,\,}
\tikzmark{d}
\medskip
\Statex Evaluate multiplicities of all $x \in M$ and all $y \in L$;
\Statex Generate the \emph{ordered} multiset $U := M \cup L$;
\Statex Initialise $P \in \Mat\big(\!\abs{U}, \R \big)$ by setting $P(i,j)=0$ for all $i,j$; 
\For{$i,j\in\{1,\dots, \abs{U}\}$}
\If{$U(i) \to U(j)$}
\State $P(i,j) = 1$
\EndIf
\EndFor
\end{algorithmic}
\end{algorithm}

\section{Relative motion of the voices and complexity of a voice leading}

We have seen in the previous section how the partial permutation matrix associated with a voice leading contains the information about the passage from one note to the next one for each voice.
Here we are going to illustrate that, in fact, the tool that we have built also encodes the direction of motion of the different voices, including the crossings.

On the one hand, in Music one distinguishes between three main behaviours (cf.~Figure~\ref{fig:motions}; we omit parallel motion because that is not involved in our analysis):
\begin{itemize}
\item \emph{Similar motion}, when the voices move in the same direction;
\item \emph{Contrary motion}, when the voices move in opposite directions;
\item \emph{Oblique motion}, when only one voice is moving.
\end{itemize}
\begin{figure}[tb]
\centering
\includegraphics[width=\textwidth]{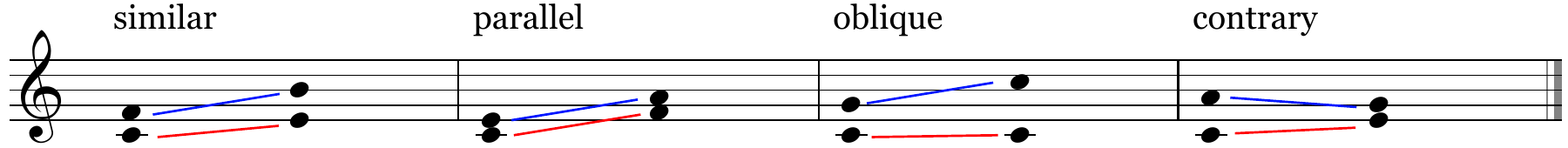}
\caption{Motion classes for two voices. \emph{Similar:} same direction but different intervals; \emph{parallel:} same direction and same intervals; \emph{oblique:} only one voice is moving;
		\emph{contrary:} opposite directions.}
\label{fig:motions}
\end{figure}
%
On the other hand, with reference to a partial permutation matrix $(a_{ij})$, it is possible to describe the motion of a voice by noting three conditions, which are immediate consequences of the ordering of the union multiset:
\begin{enumerate}[label=\arabic*)]
\item If there exists an element $a_{ij} = 1$ for $i < j$ then the $i$-th voice is moving ``upwards'';
\item If there exists an element $a_{ij} = 1$ for $i > j$ then the $i$-th voice is moving ``downwards'';
\item If there exists an element $a_{ii} = 1$ then the $i$-th voice is constant.
\end{enumerate}
The connection between the two worlds is the following:
\begin{itemize}
\item If either Condition~1) or Condition~2) is verified by two distinct elements then we have \emph{similar motion};
\item If both Condition~1) and Condition~2) hold for two distinct elements then we are facing \emph{contrary motion};
\item The case of \emph{oblique motion} involves Conditions~1) and 3) or Conditions~2) and 3), for at least two distinct elements.
\end{itemize}

As we mentioned in Section~\ref{sec:intro}, \emph{voice crossing} is a particular case of these motions where the voices swap their relative positions.
This phenomenon can be described in terms of multisets as follows.

\begin{defn} \label{def:crossing}
Let $(x_1, \dotsc, x_n) \to (y_1, \dotsc, y_n)$ be a voice leading ($n \in \N$). If there exist two pairs $(x_i, y_i)$ and $(x_j, y_j)$ such that
$x_i < x_j$ and $y_i > y_j$ or such that $x_i > x_j$ and $y_i < y_j$ then we say that a \emph{(voice) crossing} occurs between voice $i$ and voice $j$.
\end{defn}

\noindent
The partial permutation matrix retrieves even this information, as the following proposition shows.

\begin{prop} 
Consider a voice leading of $n$ voices and let $P \= (a_{ij})$ be its associated partial permutation matrix. Choose indices $i, j, k, l \in \{1, \dots, n\}$ such that $a_{ij} = 1$ and $a_{kl} = 1$. Then there is a crossing between these two voices if and only if one of the following conditions hold:
\begin{enumerate}[label=\roman*)]
\item $i < k$ and $j > l$;
\item $i > k$ and $j < l$.
\end{enumerate}
Furthermore, the total number of voices that cross the one represented by $a_{ij}$ is equal to the number of $1$'s in the submatrices $(a_{rs})$ and $(a_{tu})$ of $P$ determined by the following restrictions on the indices:
$r > i$, $s < j$ and $t < i$, $u > j$.
\end{prop}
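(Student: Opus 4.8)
The plan is to reduce both assertions to the order-preserving dictionary between positions in the ordered union multiset $M \cup L$ and the pitches those positions label, after which the crossing condition of Definition~\ref{def:crossing} can be read off directly from the matrix indices. First I would fix notation: write the ordered union multiset as $U \= M \cup L = (u_1, \dotsc, u_m)$ with $u_1 \leq \dotsb \leq u_m$ and $m = \abs{M \cup L}$. An entry $a_{ij} = 1$ then encodes a single voice whose source pitch is $u_i$ and whose target pitch is $u_j$, and likewise $a_{kl} = 1$ encodes the voice $u_k \to u_l$. Because the rows and columns of $P$ are indexed following the ordering $\leq$, position inequalities translate into pitch inequalities: $i < k$ gives $u_i \leq u_k$ and $j > l$ gives $u_j \geq u_l$, and conversely. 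Substituting these two voices into Definition~\ref{def:crossing}, whose source pitches are $u_i, u_k$ and whose target pitches are $u_j, u_l$, a crossing means precisely that the sources and the targets are oppositely ordered, which is exactly conditions~i) and~ii). I would verify both implications of the equivalence by applying this dictionary in each direction.

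The step that needs care, and which I expect to be the main obstacle, is the presence of repeated pitches: Definition~\ref{def:crossing} uses \emph{strict} inequalities on pitches, whereas the indices $i,k$ (respectively $j,l$) are always distinct even when $u_i = u_k$ (respectively $u_j = u_l$). Here I would invoke Convention~\ref{fact:convention}: whenever several voices share a common target (and, by the symmetric argument, a common source) the $1$'s are placed along indices that increase simultaneously in both coordinates. Consequently a pair of voices tied in their sources or in their targets can never realise the reversed pattern of i) or ii), so the matrix criterion records no crossing for such a pair, in agreement with the strict inequalities of the definition. This makes the equivalence exact rather than merely valid in the generic distinct-pitch case.

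For the counting statement I would fix the voice represented by $a_{ij} = 1$ and classify each crossing voice $a_{kl} = 1$ by which of the two mutually exclusive conditions it meets. Condition~i), rewritten as $k > i$ and $l < j$, places its $1$ in the lower-left block $\{(r,s) : r > i,\ s < j\}$, while condition~ii), rewritten as $k < i$ and $l > j$, places its $1$ in the upper-right block $\{(t,u) : t < i,\ u > j\}$. These two index blocks are disjoint, since $r > i$ and $t < i$ cannot hold at once, and neither contains the entry $(i,j)$ itself. Because $P$ is a partial permutation matrix, every row and every column carries at most one $1$, so distinct crossing voices sit in distinct positions and none is counted twice. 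Summing the $1$'s in the two submatrices therefore returns exactly the number of voices crossing the one at $(i,j)$, which is the asserted formula.
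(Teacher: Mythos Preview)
Your proposal is correct and follows the same line as the paper's proof: both translate the row/column indices of $P$ into positions in the ordered union multiset and read off Definition~\ref{def:crossing} from there, then obtain the count by partitioning the crossing voices according to which of i) or ii) they satisfy. The only notable difference is one of thoroughness: the paper's proof treats the ``straightforward'' equivalence without discussing repeated pitches, deferring the role of Convention~\ref{fact:convention} to a remark after the proof, whereas you incorporate that discussion directly into the argument to justify why tied sources or targets never produce a spurious index reversal. That extra care is appropriate, since the biconditional as stated does rely on the convention; your handling of the tied-source case by symmetry is a slight extrapolation of Convention~\ref{fact:convention} (which is phrased only for tied targets), but it matches the intended reading of the construction.
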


\begin{proof}
In a partial permutation matrix the row index of a non-zero entry denotes the initial position of a certain voice in the ordered union multiset, whereas the column index of the same entry represents its final position after the
transition. It is then straightforward from Definition~\ref{def:crossing} that for a voice crossing to exist either condition i) or condition ii) must be verified.
Every entry $a_{kl}$ satisfying one of those conditions refers to a voice that crosses the one represented by $a_{ij}$, hence the number of crossings for $a_{ij}$ equals the amount of $1$'s in positions $(r, s)$ such that $r > i$
and $s < j$, summed to the number of $1$'s in positions $(t, u)$ such that $t < i$ and $u > j$.
\end{proof}

\begin{rem}
The fact that the number of crossings with a given voice equals the number of $1$'s in the submatrices determined by the entry corresponding to that voice (as explained in the previous proposition) holds true only because
we assumed Convention~\ref{fact:convention}. Indeed, if we did not make such an assumption, the submatrices could contain positive entries referring to voices ending in the same note but that do not produce crossings.
\end{rem}

From what we have shown thus far it emerges that it is possible to characterise a voice leading by counting the voices that are moving upwards, those that are moving downwards, those that remain constant
and the number of crossings. We summarise these features in a $4$-dimensional \emph{complexity vector} $c$ defined by
\begin{equation} \label{eq:complexvect}
	c \= \big(\#\text{upward voices},\ \#\text{downward voices},\ \#\text{constant voices},\ \#\text{crossings} \big),
\end{equation}
so that we are now able to classify and distinguish voice leadings by simply looking at these four aspects.

\begin{example}
\emph{Similar motion.} The voice leading $(C_1, E_1, G_1) \to (D_1, F_1, A_1)$ is represented by
\[
\begin{pmatrix}
	0 & 1 & 0 & 0 & 0 & 0\\
	0 & 0 & 0 & 0 & 0 & 0\\
	0 & 0 & 0 & 1 & 0 & 0\\
	0 & 0 & 0 & 0 & 0 & 0\\
	0 & 0 & 0 & 0 & 0 & 1\\
	0 & 0 & 0 & 0 & 0 & 0 
\end{pmatrix}
\]
and its complexity vector is $(3, 0, 0, 0)$.

\emph{Oblique motion.} The voice leading $(G_2 , G_2, C_3) \to (C_3,C_3,C_3)$ is associated with
\[
\begin{pmatrix}
	0 & 0 & 1 & 0 & 0\\
	0 & 0 & 0 & 1 & 0\\
	0 & 0 & 0 & 0 & 0\\
	0 & 0 & 0 & 0 & 0\\
	0 & 0 & 0 & 0 & 1
\end{pmatrix}
\]
and its complexity vector is $(2, 0, 1, 0)$.

\emph{Voice crossing.} The voice leading $ (C_1 , E_1, G_1)\to (G_1 , C_1 , E_1)$ is represented by
\[
\begin{pmatrix}
	0 & 0 & 1 \\
	1& 0 & 0  \\
	0 & 1 & 0 
\end{pmatrix} 
\]
and its complexity vector is $(1, 2, 0, 2)$.
\end{example}

%

By virtue of these tools it is straightforward to analyse an entire piece of music: it is enough to divide it into pairs of notes for each voice and apply the procedure described above for each passage.
The concatenation of all the consecutive passages results then in a sequence of partial permutation matrices, whence one can extract a sequence of complexity vectors. This last piece of information can be visualised as a
set of points in a $4$-dimensional space --- or rather as one or more of its $3$-dimensional projections (see Subsection~\ref{subsec:examples}). In fact, if one wants to represent the complexity of the whole composition as a
point cloud, one should take into account that different matrices can produce the same complexity vector; therefore we have a \emph{multiset} of points in $\R^4$ (with non-negative integer components).

\subsection{Complexity analysis of two \emph{Chartres Fragments}} \label{subsec:examples}

We are going to analyse two pieces that are parts of the \emph{Chartres Fragments}, an ensemble of compositions dating back to the Middle Ages: \emph{Angelus Domini} and \emph{Dicant nunc Judei};
both of them are counterpoints of the first species and involve only two voices.
The musical interest in these compositions consists in the introduction of a certain degree of independence between the voices and the use of a \emph{parsimonious voice leading}, \ie an attempt to make the passage from a
melodic state to the next as smooth as possible. Note how the independence of the voices is reflected by the presence of contrary motions and crossings, which can then be interpreted as a rough measure of this feature.
For a complete treatise on polyphony and a historical overview we refer the reader to \cite{taruskin2009music}.

In what follows, we represent the multiplicity of each complexity vector $c$ as a circle of centre $c \in \R^4$ and radius equal to the \emph{normalised multiplicity} $\mu(c)/n$ of $c$, where $\mu(c)$ is the number of
occurrences of $c$ in the analysed piece and $n$ is the total number of notes played or sung by each voice in the whole piece.

\paragraph{Angelus Domini.}

The fragment under examination is depicted in Figure~\ref{fig:indep}; here is the list of its first four voice leadings, as they are generated by the pseudocode described in Algorithm~\ref{alg:ppmat}:

\begin{figure}[tb]
\centering
\includegraphics[width=\textwidth]{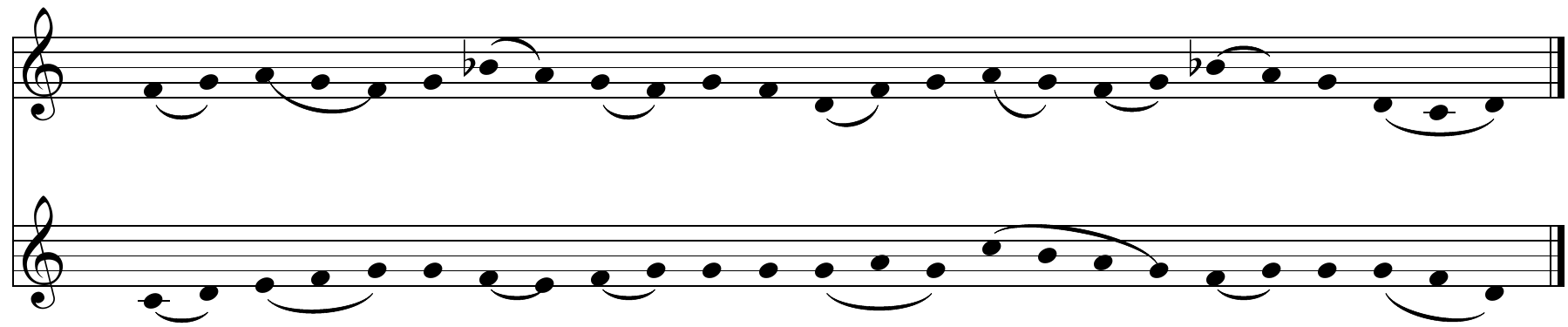} 
\caption{\emph{Angelus Domini}} \label{fig:indep}
\end{figure}

\begin{verbatim}
Voice Leading: ['F4', 'C4'] ['G4', 'D4']
[2, 0, 0, 0] - similar motion up
Voice Leading: ['G4', 'D4'] ['A4', 'E4']
[2, 0, 0, 0] - similar motion up
Voice Leading: ['A4', 'E4'] ['G4', 'F4']
[1, 1, 0, 0] - contrary motion
Voice Leading: ['G4', 'F4'] ['F4', 'G4']
[1, 1, 0, 1] - contrary motion - 1 crossing
\end{verbatim}

\noindent
Table~\vref{tab:complexvect} contains the the complexity vectors and their occurrences in the piece; the point cloud associated with this multiset is represented in Figure~\ref{fig:all}. Observe how the projection that neglects
the component of $c$ corresponding to the number of constant voices (Figure~\ref{fig:all2}) gives an immediate insight on the relevance of voice crossing in the piece.

\begin{figure}[tb]
\centering
\subfloat[Projection neglecting the \emph{crossing} component of the complexity vectors. \label{fig:all1}]{\includegraphics[width = 0.45\textwidth]{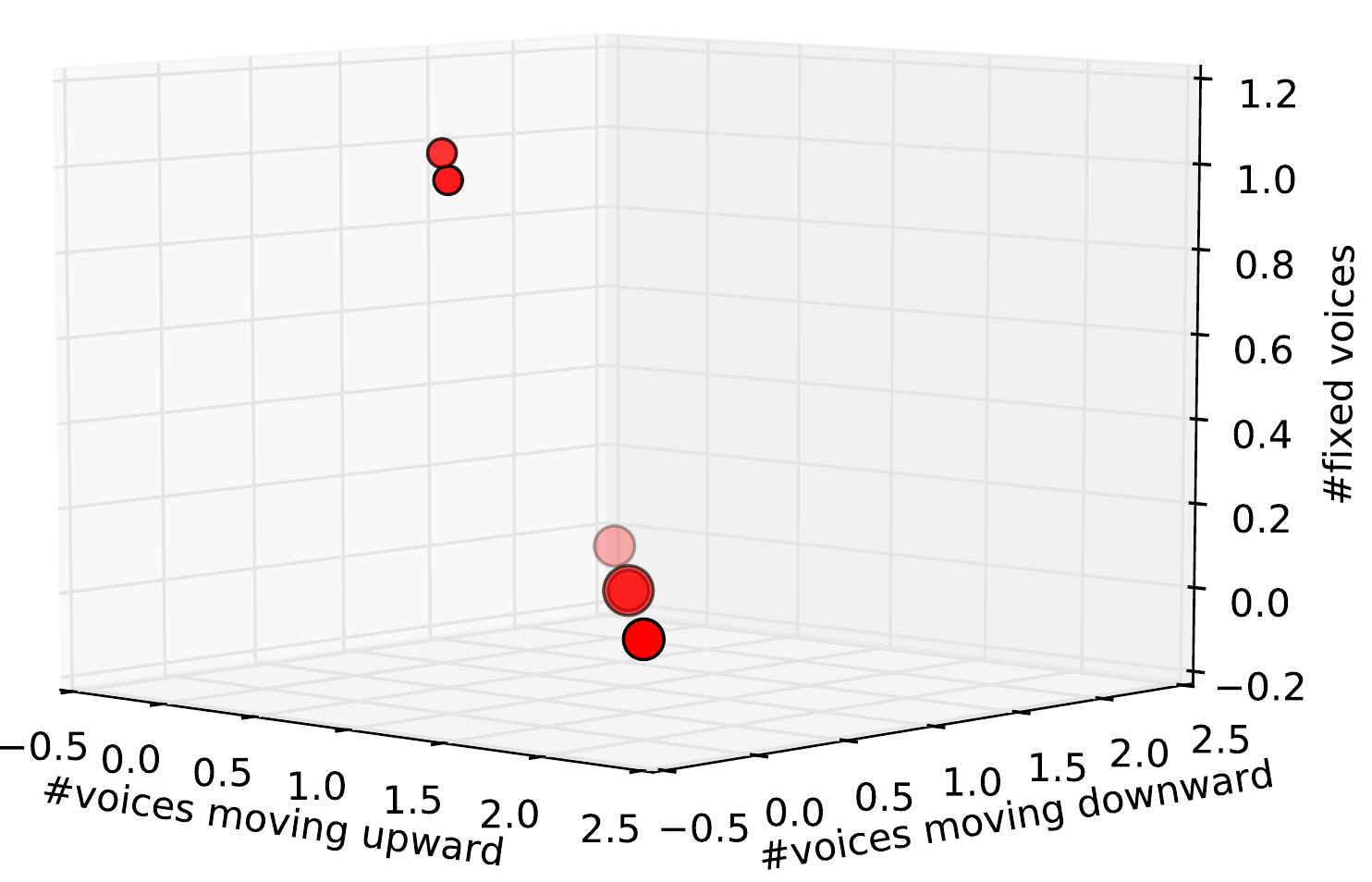}} \qquad
\subfloat[Projection neglecting the \emph{constant voices} component of the complexity vectors. \label{fig:all2}]{\includegraphics[width = 0.45\textwidth]{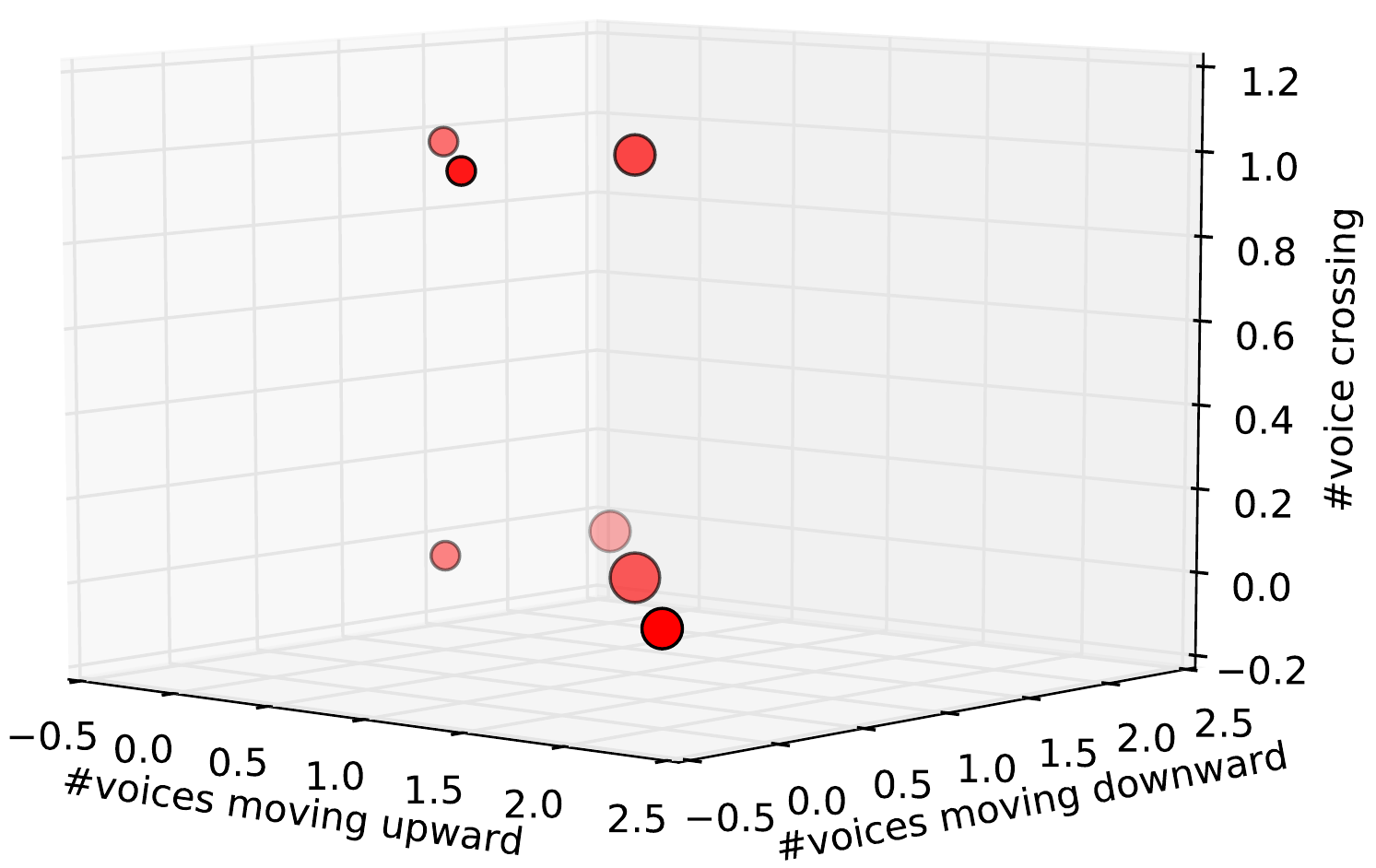}}
\caption{Three-dimensional projections of the complexity cloud of the paradigmatic voice leading \emph{Angelus Domini}.
		The radius of each circle represents the normalised multiplicity of the corresponding complexity vector.} \label{fig:all}
\end{figure}

\paragraph{Dicant nunc Judei.}

The first part of the output of Algorithm~\ref{alg:ppmat} produces the following analysis: 

\begin{verbatim}
Voice Leading: ['F4', 'C4'] ['G4', 'E4']
[2, 0, 0, 0] - similar motion up
Voice Leading: ['G4', 'E4'] ['F4', 'D4']
[0, 2, 0, 0] - similar motion down
Voice Leading: ['F4', 'D4'] ['E4', 'C4']
[0, 2, 0, 0] - similar motion down
Voice Leading: ['E4', 'C4'] ['D4', 'D4']
[1, 1, 0, 1] - contrary motion - 1 crossing
\end{verbatim}

The complexity vectors arising in the whole piece and their multiplicities are again collected in Table~\ref{tab:complexvect}; see Figure~\ref{fig:dic} instead for a visualisation of the point cloud describing the piece.
Note how the voice crossing is more massive than in the point cloud describing \emph{Angelus Domini}.
In addition, the point $(0, 0, 0)$ in Figure~\ref{fig:dic2} corresponds to the point $(0, 0, 2, 0) \in \R^4$, that represents trivial voice leadings where both parts do not vary. 

\begin{figure}[tb]
\centering
\includegraphics[width=\textwidth]{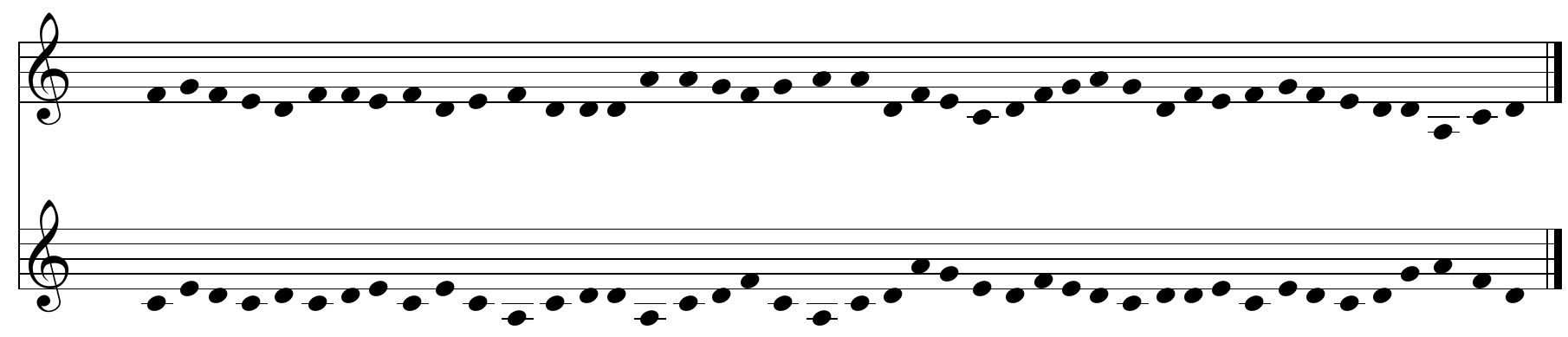} 
\caption{\emph{Dicant nunc Judei}, Chartres fragment.} \label{fig:dicant_sheet}
\end{figure}

\begin{figure}[tb]
\centering
\subfloat[Projection neglecting the \emph{crossing} component of the complexity vectors. \label{fig:dic1}]{\includegraphics[width = 0.45\textwidth]{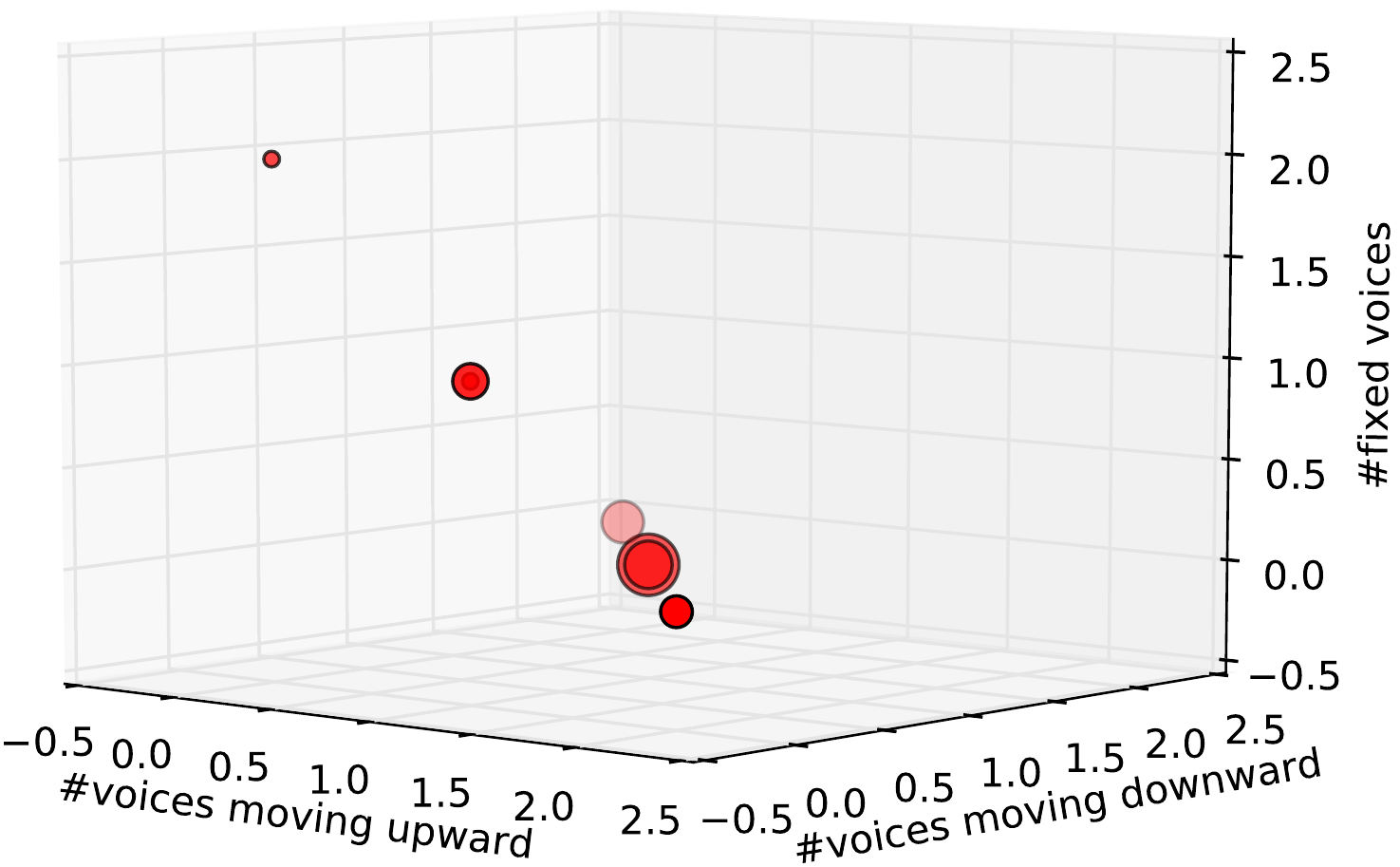}} \qquad
\subfloat[Projection neglecting the \emph{constant voices} component of the complexity vectors. \label{fig:dic2}]{\includegraphics[width = 0.45\textwidth]{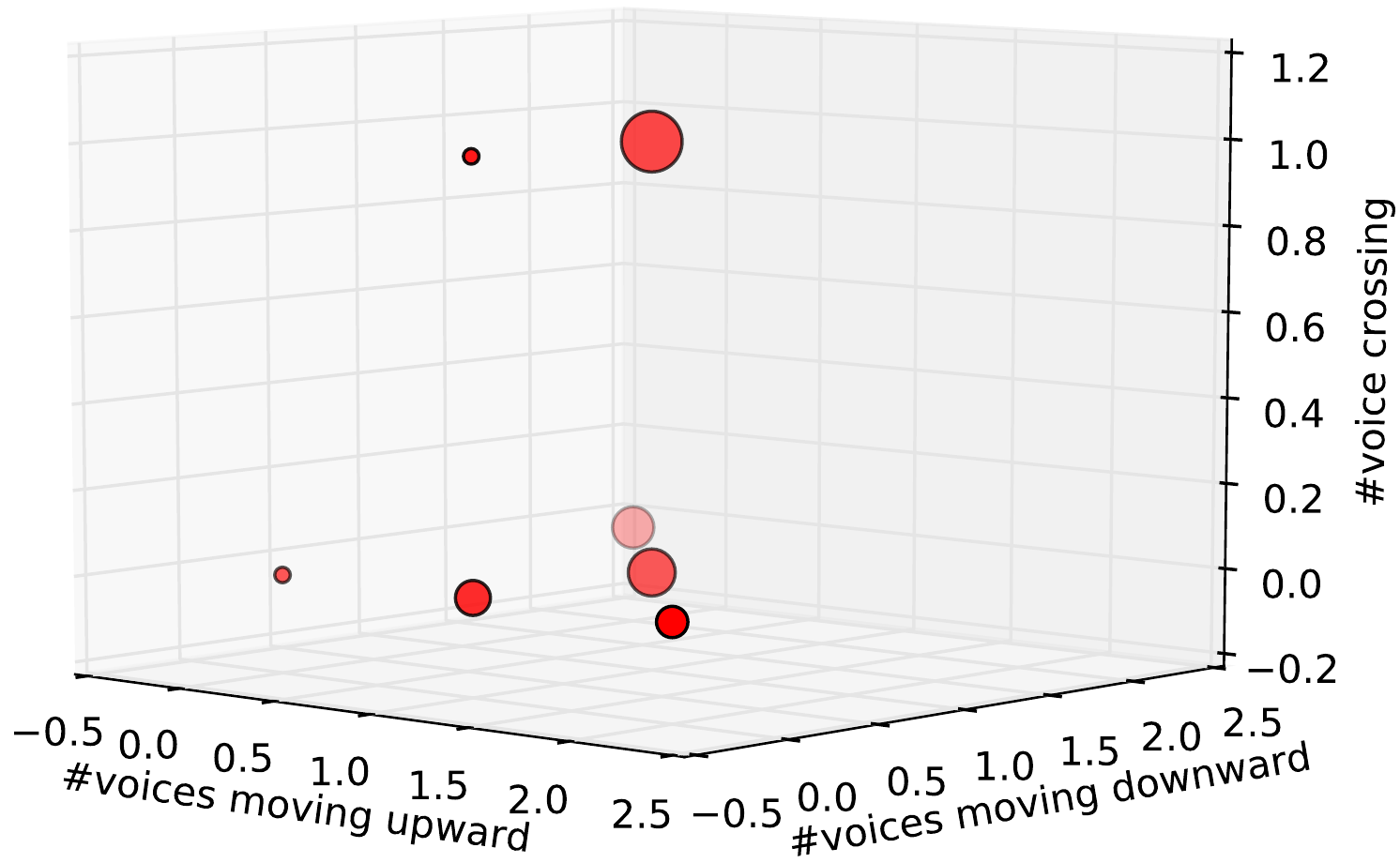}}
\caption{Three-dimensional projections of the complexity cloud of the paradigmatic voice leading \emph{Dicant nunc Judei}.
		The radius of each circle represents the normalised multiplicity of the corresponding complexity vector.} \label{fig:dic}
\end{figure}

\begin{table}[tb]
\caption{Complexity vectors of the analysed fragments and their occurrences.} \label{tab:complexvect}
\centering
\begin{tabular}{cS}
\toprule
\multicolumn{2}{c}{\emph{Angelus Domini}} \\
	$c$		&	{$\mu(c)$} \\
\midrule
$(0, 1, 1, 0)$	&	2 \\
$(0, 1, 1, 1)$	&	2 \\
$(0, 2, 0, 0)$	&	4 \\
$(1, 0, 1, 1)$	&	2 \\
$(1, 1, 0, 0)$	&	6 \\
$(1, 1, 0, 1)$	&	4 \\
$(2, 0, 0, 0)$	&	4 \\
\bottomrule
\end{tabular}
\hspace*{1cm}
\begin{tabular}{cS}
\toprule
\multicolumn{2}{c}{\emph{Dicant nunc Judei}} \\
$c$		& 	{$\mu(c)$} \\
\midrule
$(0, 0, 2, 0)$	&	1 \\
$(0, 2, 0, 0)$	&	7 \\
$(1, 0, 1, 0)$	&	5 \\
$(1, 0, 1, 1)$	&	1 \\
$(1, 1, 0, 0)$	&	9 \\
$(1, 1, 0, 1)$	&	15 \\
$(2, 0, 0, 0)$	&	4 \\
\bottomrule
\end{tabular}
\end{table}

\section{Rhythmic independence and rests}

The examples analysed in Subsection~\ref{subsec:examples} are counterpoints of the first species --- which is the simplest case, in that the voices follow a \emph{note-against-note} flow.
It is however possible to study more complex scenarios by introducing \emph{rhythmic independence} between voices and \emph{rests} in the melody, in any case reducing non-simultaneous voices to the simplest case.

If the voices play at different rhythms or follow rhythmically irregular themes, we consider the minimal rhythmic unit $u$ appearing in the phrase and \emph{homogenise} the composition based on that unit:
if a note has duration $ku$, with $k \in \N$, we represent it as $k$ repeated notes of duration $u$ (see Figure~\ref{fig:reduction} for an example).
This transformation of the original counterpoint introduces only oblique motions and does not alter the number of the other three kinds of motion.

\begin{figure}[tb]
\centering
\subfloat[Counterpoint of the fifth species. \label{fig:rit1}]{\includegraphics[width =\textwidth]{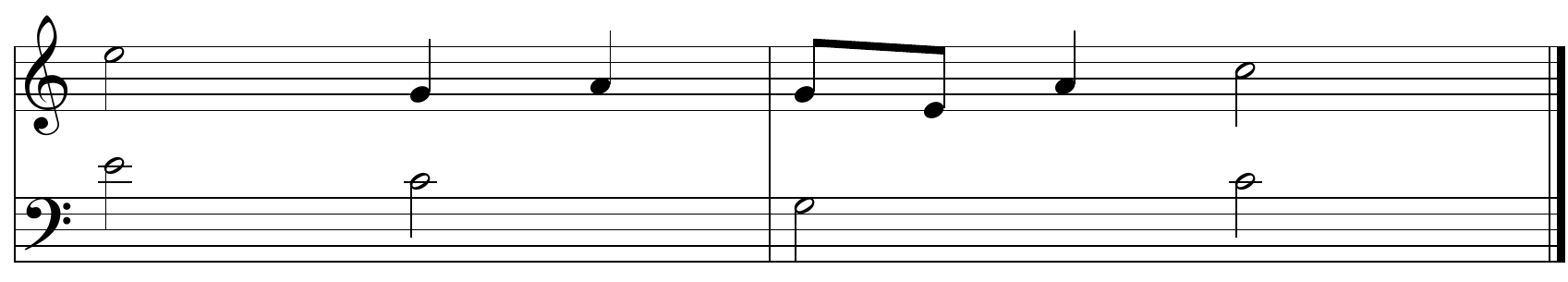}} \\
\subfloat[Reduction to the first species. \label{fig:rit2}]{\includegraphics[width = \textwidth]{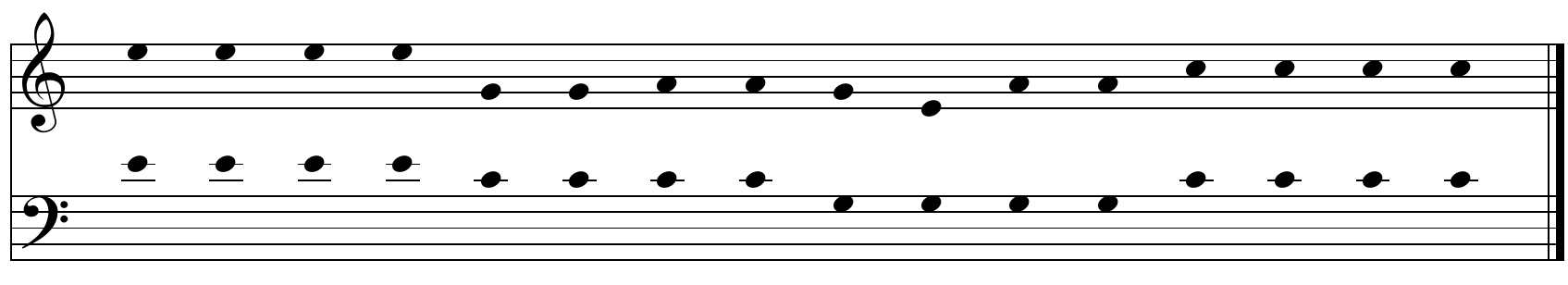}}
\caption{Reduction of rhythmically independent voices to a counterpoint of the first species.}
\label{fig:reduction}
\end{figure}

In musical terms, if a voice is silent it is neither moving nor being constant and it cannot cross other voices. Therefore, in order to include rests in our model it is necessary to slightly modify Algorithm~\ref{alg:ppmat} by
introducing a new symbol ($p$) in the dictionary of pitches; we also choose to indicate a rest in the matrices associated with a voice leading by the entry $-1$.
We adopt the following convention concerning the ordered union multiset.

\begin{conv}
We choose rests to be the last elements in the ordered union multiset associated with a voice leading. In other words, we declare $p$ to be strictly greater than any other pitch symbol.
\end{conv}

\begin{example}
The voice leading $(p , D_4, D_5) \to (D_4 , C_3 , C_3)$ corresponds to the matrix
\[
\begin{pmatrix}
 0 & 0 & 0 & 0 & 0\\
 0 & 0 & 0 & 0 & 0\\
 1 & 0 & 0 & 0 & 0\\
 0 & 1 & 0 & 0 & 0\\
 0 & 0 & -1 & 0 & 0
\end{pmatrix}.
\]
\end{example}


\begin{rem}
Note that when introducing the $-1$'s in the matrix associated with a voice leading we are no longer dealing with partial permutation matrices. However, to study voice leadings with rhythmic independence of the voices as
before (thus ignoring rests) it is enough to consider the minor of the matrix obtained by deleting all rows and columns containing $-1$ (which is obviously again a partial permutation matrix).
\end{rem}

We extend the complexity vector defined previously in Formula~\eqref{eq:complexvect} by adding a fifth component that counts the number of voices that are silent at least once in the voice leading, \ie it counts the number
of negative ($-1$) entries of the associated matrix.
Furthermore, we slightly modify also the notion of \emph{normalised multiplicity} of a complexity vector $c$, needed for the representation of the complexity of a piece in the form of a point cloud, now dividing the number
$\mu(c)$ of occurrences of $c$ in the piece by the total number of notes per voice \emph{after the homogenisation}.

\subsection{Example: the \emph{Retrograde Canon} by J.\,S.~Bach} \label{subsec:canon}

We consider the \emph{Retrograde Canon} (also known as \emph{Crab Canon}), a palindromic canon with two voices belonging to the \emph{Musikalisches Opfer} by J.\,S.~Bach, the beginning of which is reproduced in
Figure~\ref{fig:crabsh}.

\begin{figure}
\centering
\includegraphics[width=\textwidth]{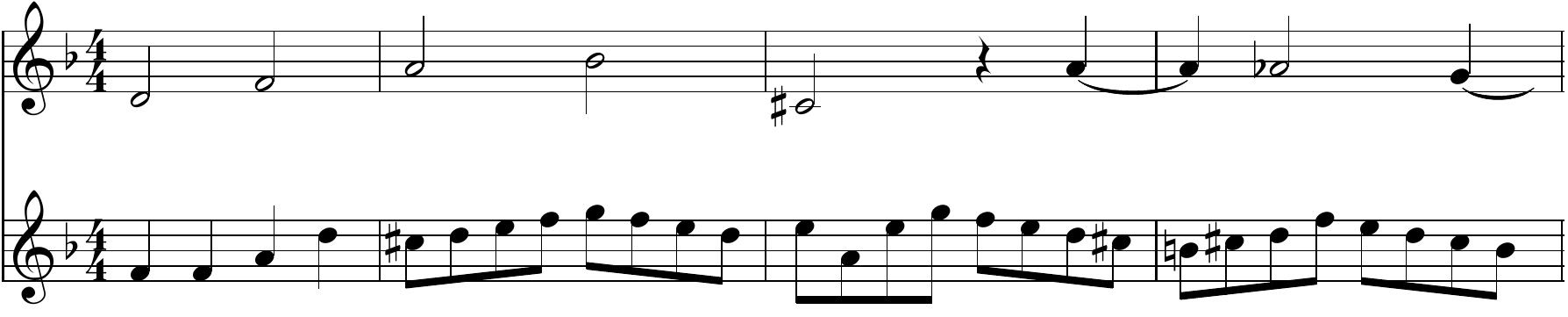}
\caption{The \emph{Retrograde Canon} (bars 1--4), a palindromic canon belonging to the \emph{Musikalisches Opfer} by J.\,S.~Bach.} \label{fig:crabsh}
\end{figure}

We homogenise the rhythm by expressing each note in eighths and we apply Algorithm~\ref{alg:ppmat}. Here is the output of the first four meaningful voice leadings:
\begin{verbatim}
Voice Leading: ['D4', 'D4'] ['D4', 'F4']
c = [1, 0, 1, 0, 0] - oblique motion
Voice Leading: ['D4', 'F4'] ['F4', 'A4']
c = [2, 0, 0, 0, 0] - similar motion up
Voice Leading: ['F4', 'A4'] ['F4', 'D5']
c = [1, 0, 1, 0, 0] - oblique motion
Voice Leading: ['F4', 'D5'] ['A4', 'C#5']
c = [1, 1, 0, 0, 0] - contrary motion
\end{verbatim}
Table~\ref{tab:canon} collects the complexity vectors and their multiplicities; they are displayed in the form of point clouds in Figure~\ref{fig:canon}. 

\begin{table}[tb]
\caption{Complexity vectors of the \emph{Retrograde Canon} and their occurrences.}
\label{tab:canon}
\centering
\begin{tabular}{cScS}
\toprule
\multicolumn{4}{c}{\emph{Retrograde Canon}} \\
	$c$ 		& {$\mu(c)$} &		$c$ 		& {$\mu(c)$} \\
\midrule
$(0, 0, 1, 0, 1)$	&	2 	&	$(1, 0, 0, 0, 1)$	&	2 \\
$(0, 0, 2, 0, 0)$	&	8 	&	$(1, 0, 1, 0, 0)$	&	43 \\
$(0, 1, 0, 0, 1)$	&	2	&	$(1, 0, 1, 1, 0)$	&	1 \\
$(0, 1, 1, 0, 0)$	&	43 	&	$(1, 1, 0, 0, 0)$	&	14 \\
$(0, 1, 1, 1, 0)$	&	1 	&	$(1, 1, 0, 1, 0)$	&	3 \\
$(0, 2, 0, 0, 0)$	&	11 	&	$(2, 0, 0, 0, 0)$	&	11 \\
\bottomrule
\end{tabular}
\end{table}

\begin{figure}[tb]
\centering
\subfloat[Projection on the first three components of the complexity vector. \label{fig:can1}]{\includegraphics[width = 0.45\textwidth]{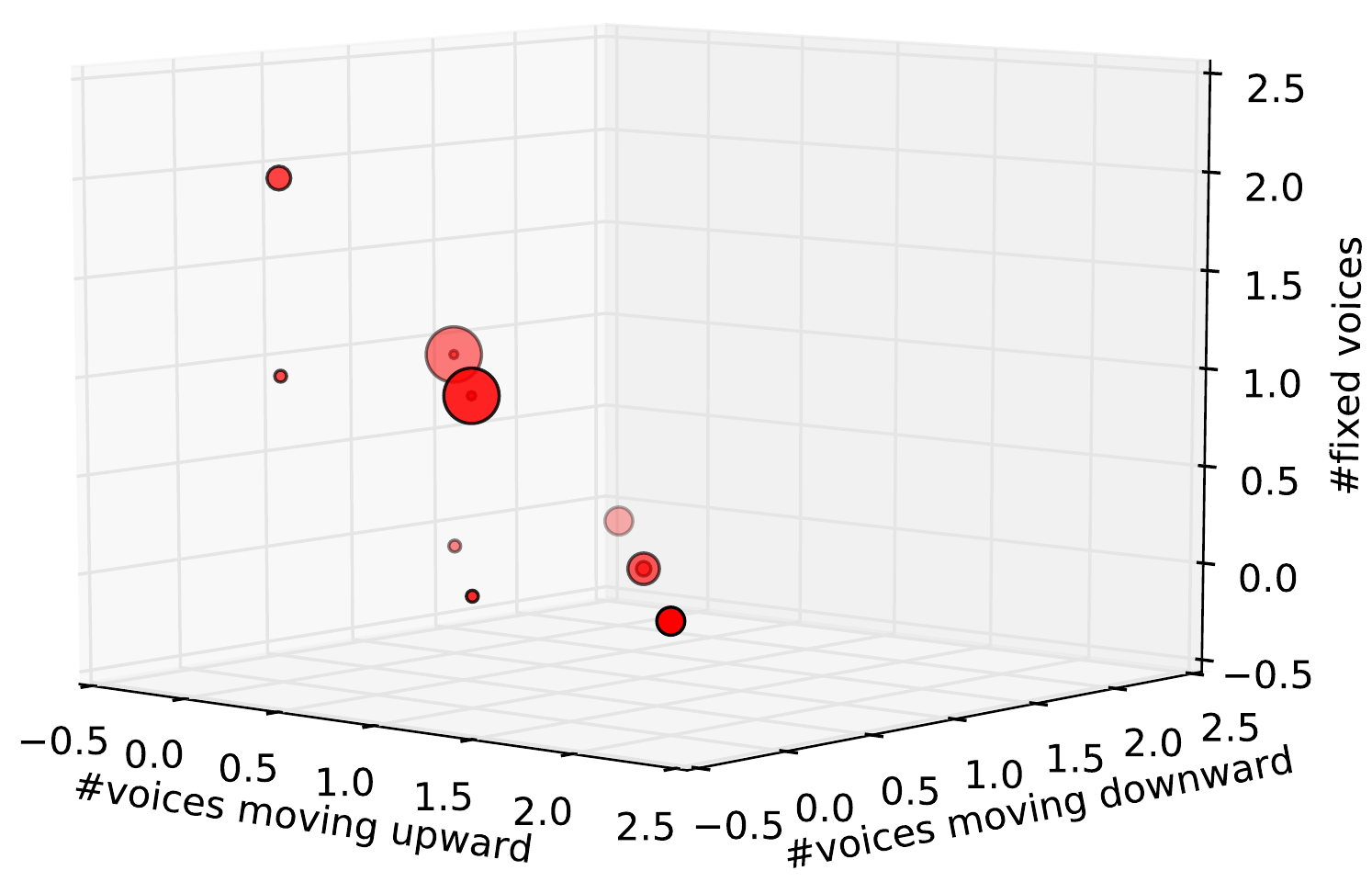}} \qquad
\subfloat[Projection on the \emph{upward, downward} and \emph{crossing} components of $c$. \label{fig:can2}]{\includegraphics[width = 0.45\textwidth]{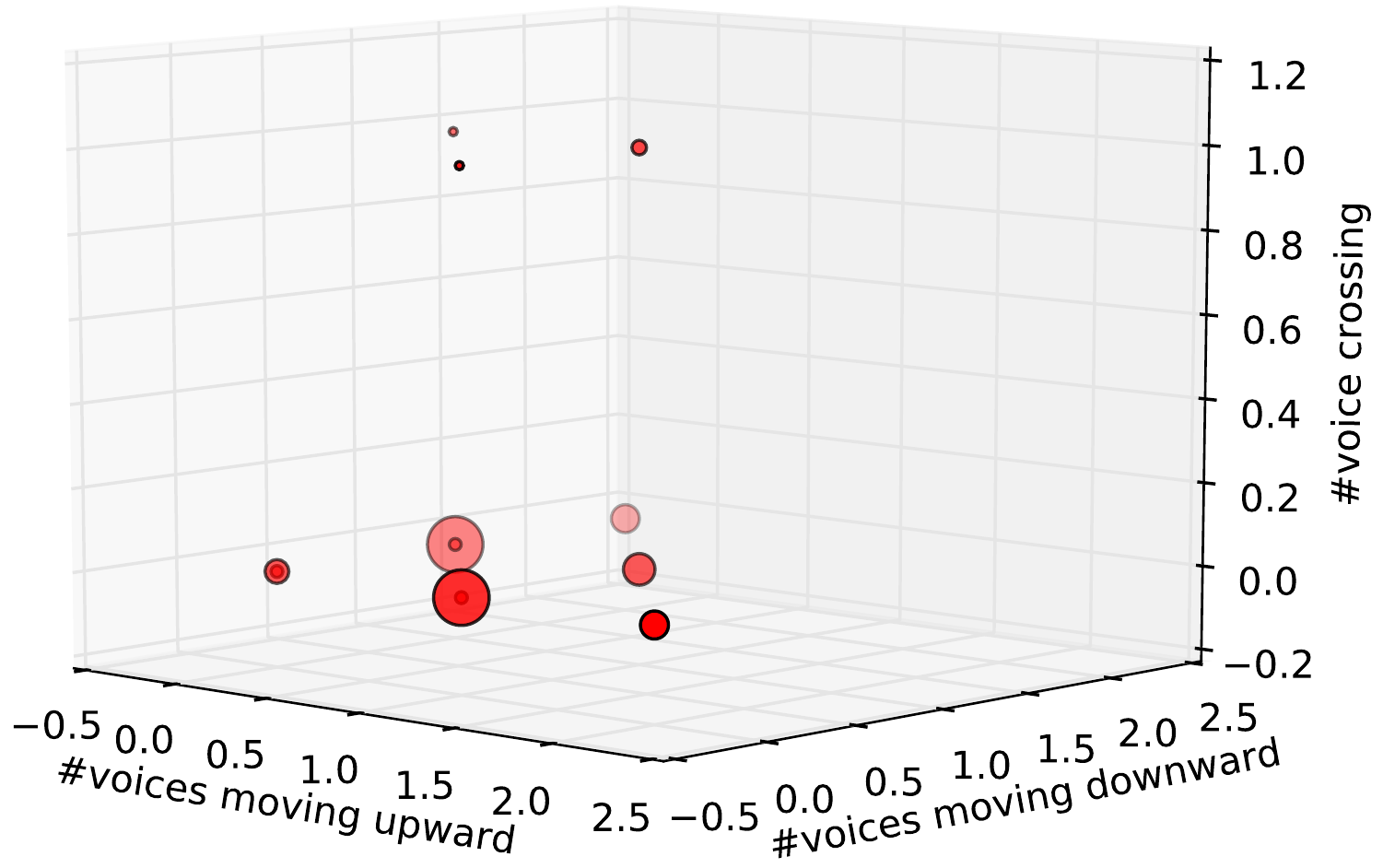}} \\
\subfloat[Projection on the \emph{upward, downward} and \emph{rest} components of $c$. \label{fig:can3}]{\includegraphics[width = 0.45\textwidth]{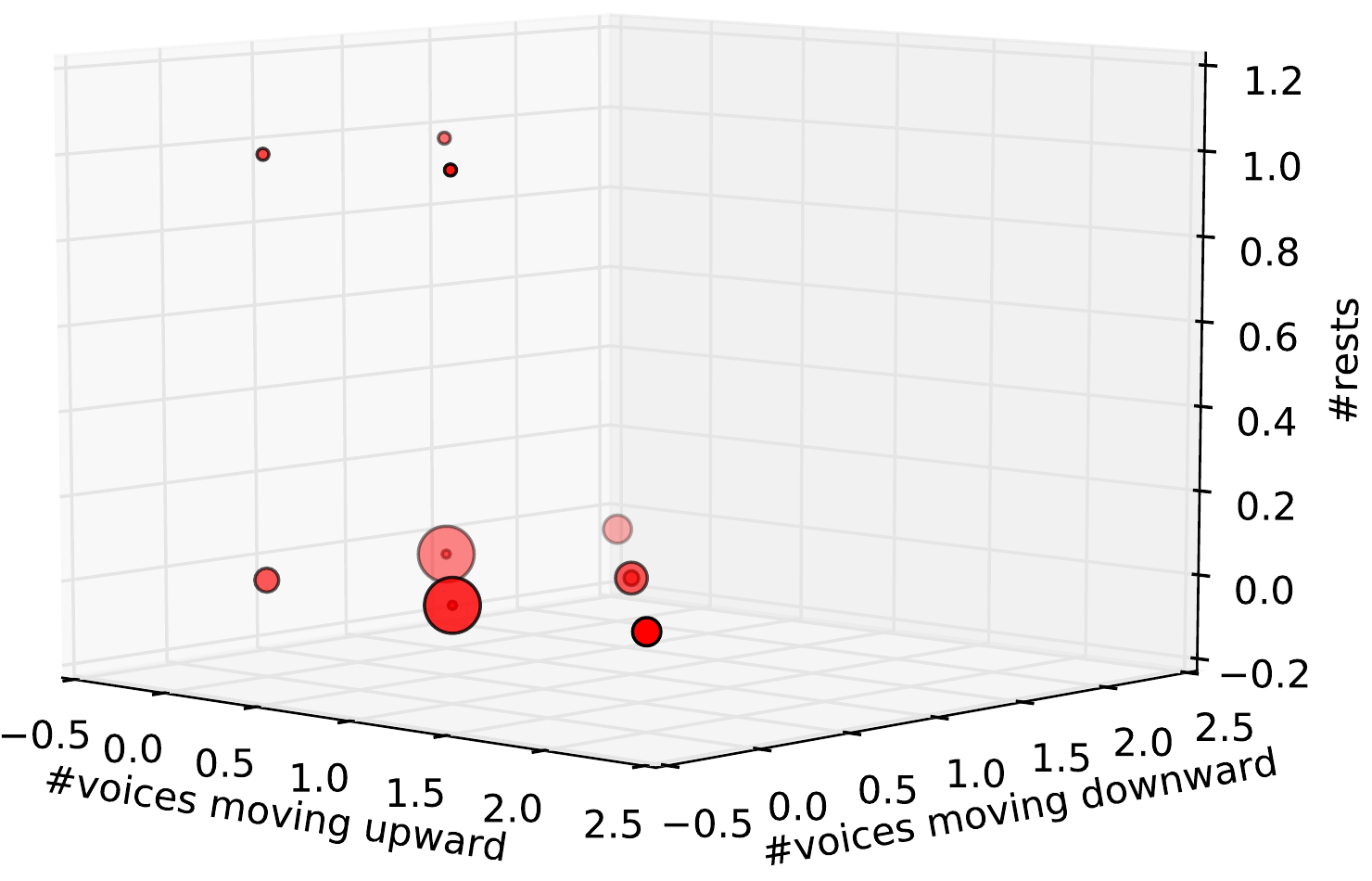}}
\caption{Three-dimensional projection of the $5$-dimensional point cloud representing the complexity of the \emph{Retrograde Canon}.
		The radius of each circle represents the normalised multiplicity of each complexity vector.}
\label{fig:canon}
\end{figure}

\section{Concatenation of voice leadings and time series}

The paradigmatic point cloud associated with a voice leading gives a useful $3$-di\-men\-sio\-nal representation of the piece; however, this analysis is just structural, as it does not take into account the way in which voice
leadings have been concatenated by the composer. It is possible to introduce this temporal dimension by looking at the sequence of complexity vectors from a different viewpoint.

The concatenation of observations in time can be seen as a \emph{time series}, that is a sequence of data concerning observations ordered according to time. 
In our case each piece of music can be described as a $5$-dimensional time series, whose observations are the complexity vectors associated with each voice leading.
More specifically, we use the so-called \emph{dynamic time warping (DTW)}, a method for comparing time-dependent sequences of different lengths: it returns a measure of similarity between two given sequences by
``warping'' them non-linearly (see Figure~\ref{fig:dtw} for an intuitive representation). We invite the reader to consult \cite{senin2008dynamic} for a detailed review of DTW algorithms.

\begin{figure}[tb]
\centering
\includegraphics[width=0.6\textwidth]{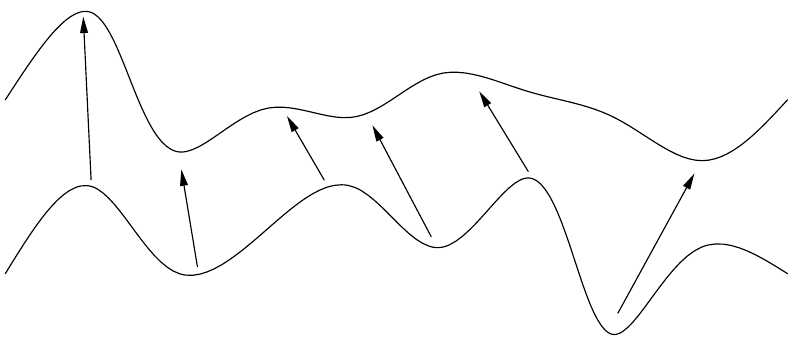} 
\caption{Comparing two time series with DTW.} \label{fig:dtw}
\end{figure}

\subsection {Dynamic time warping analysis}

Let $\mathcal{F}$ be a set, called the \emph{feature space}, and take two finite sequences $X \= (x_1, \dots, x_n)$ and $Y \= (y_1, \dots, y_m)$ of elements of $\mathcal{F}$, called \emph{features} (here $n$ and $m$ are
natural numbers). In order to compare them, we need to introduce a notion of distance between features, that is a map $\mathcal{C}: \mathcal{F} \times \mathcal{F} \to \R$, also called a \emph{cost function}, that
meets at least the following requirements:
\begin{enumerate}[label=\roman*.]
\item $\mathcal{C}(x, y) \geq 0$ for all $x, y \in \mathcal{F}$;
\item $\mathcal{C}(x, y) = 0$ if and only if $x = y$;
\item $\mathcal{C}(x, y) = \mathcal{C}(y, x)$ for all $x, y \in \mathcal{F}$.
\end{enumerate}
Now, if we apply $\mathcal{C}$ to the features $X$ and $Y$, we can arrange the values in an $n \times m$ real matrix $C \= \big(\mathcal{C}(x_i, y_j)\big)$, where $i$ ranges in $\{1, \dotsc, n\}$ and $j$ in $\{1, \dotsc, m\}$.


A \emph{$(n, m)$-warping path} in $C$ is a finite sequence $\gamma \= (\gamma_1, \dotsc, \gamma_l) \in \R^l$, with $l \in \N$, such that:
\begin{enumerate}
\item $\gamma_k \= (\gamma_k^x, \gamma_k^y) \in \{1, \dotsc, n\} \times \{1, \dotsc, m\}$ for all $k \in \{1, \dotsc, l\}$;
\item $\gamma_1 \= (1,1)$ and $\gamma_l \= (n, m)$;
\item $\gamma_k^x \leq \gamma_{k+1}^x$ and $\gamma_k^y \leq \gamma_{k+1}^y$ for all $k \in \{1, \dotsc, l - 1\}$;
\item $\gamma_{k+1} - \gamma_k \in \big\{(1,0), (0, 1), (1,1)\big\}$ for all $k \in \{1, \dotsc, l - 1\}$.
\end{enumerate}
The \emph{total cost} of a $(n, m)$-warping path $\gamma$ over the features $X$ and $Y$ is defined as
\[
	\mathcal{C}_\gamma(X,Y) \= \sum_{k=1}^l \mathcal{C}(x_{\gamma_k^x}, y_{\gamma_k^y}).
\]
An \emph{optimal warping path} on $X$ and $Y$ is a warping path realising the minimum total cost (see Figure~\ref{fig:optimalpath}). We are now ready to define the \emph{DTW distance} between $X$ and $Y$:
\[
	DTW(X,Y) \= \min\Set{\mathcal{C}_\gamma(X,Y) | \gamma \text{ is a $(n, m)$-warping path}}.
\]

\begin{rem}
Note that the minimum always exists because the set is finite.
\end{rem}

\begin{figure}[tb]
\centering
\includegraphics[width=0.5\textwidth]{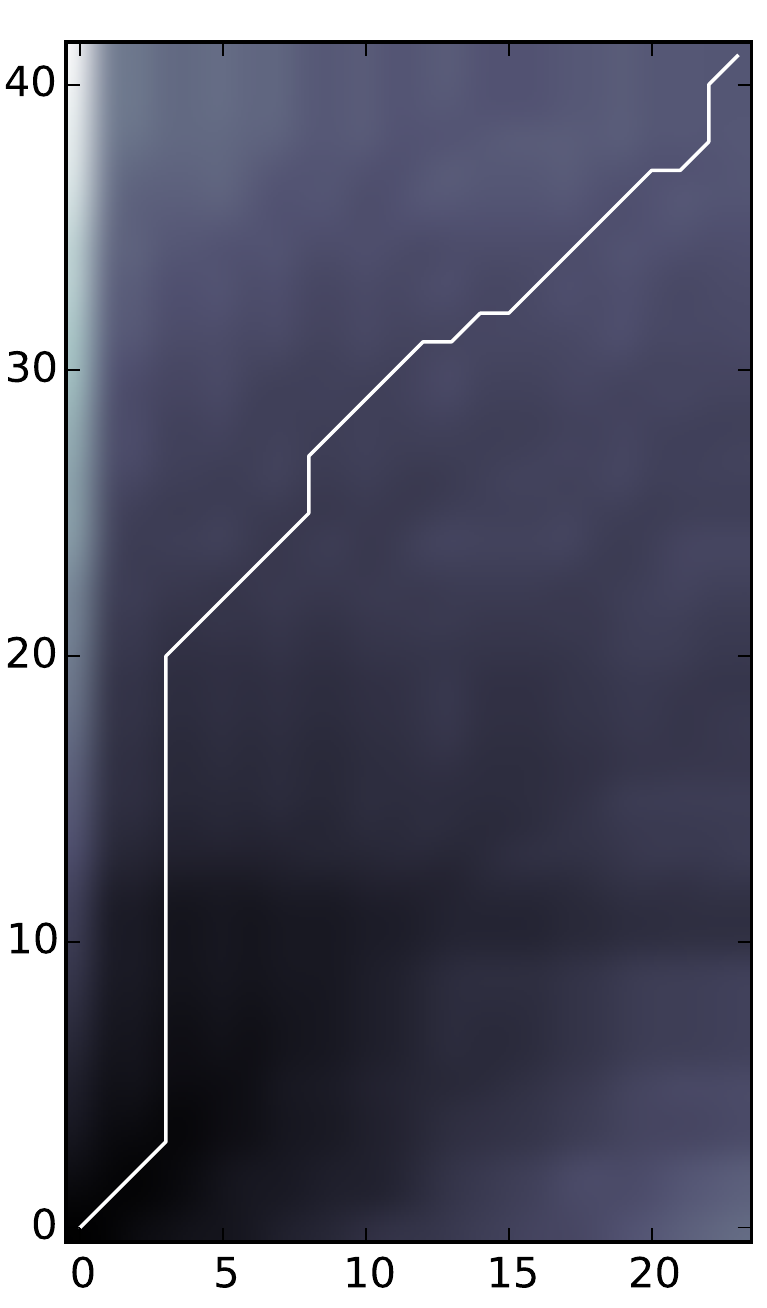} 
\caption{Optimal warping path on \emph{Angelus Domini} and \emph{Dicant nunc Judei}.} \label{fig:optimalpath}
\end{figure}

We computed the DTW distance between each pair of the three examples that we analysed in Subections~\ref{subsec:examples} and \ref{subsec:canon}, choosing as cost function the Euclidean distance in $\R^5$. 
We embedded the $4$-dimensional complexity vectors in $\R^5$ by adding a fifth component and setting it to $0$. The results of the comparison are shown in Table~\ref{tab:dist}. Although we analysed only three
compositions, it is possible to observe how the DTW distance segregates the two pieces belonging to the Chartres fragments.

\begin{table}[tb]
\caption{DTW distance matrix for the three time series of complexity vectors.} \label{tab:dist}
\centering
\begin{tabular}{lccc}
\toprule
			& \emph{Angelus} 	& \emph{Dicant}	& \emph{Canon} \\
\midrule
\emph{Angelus}	&	0.00			& 	0.62			&	1.34 \\
\emph{Dicant}	&	0.62			&	0.00			&	1.16 \\
\emph{Canon}	&	1.34			&	1.16			&	0.00 \\
\bottomrule
\end{tabular}
\end{table}

\section{Conclusion}

Our analysis showed that our definition of complexity in terms of the relative movements of the voices and especially of crossing is suitable for characterising a musical piece. Point-cloud representation yields a ``photograph''
of complexity, a sort of fingerprint that lets clearly emerge what are the main features of the examined composition, noticeable even at first glance. Dynamic time warping provides then further support to this evidence by directly measuring the
distance between the complexities of two pieces, considering each complexity vector as an observation of the piece in time, and giving a quantitative description of the dissimilarity of the time series describing the pieces. 


\bibliographystyle{abbrv}
\bibliography{biblio.bib}

\begin{thebibliography}{1}

\bibitem{aldwell2010harmony}
E.~Aldwell, C.~Schachter, and A.~Cadwallader.
\newblock {\em Harmony and voice leading}.
\newblock Cengage Learning, 2010.

\bibitem{HJ91}
R.~A. Horn and C.~R. Johnson.
\newblock {\em Topics in matrix analysis}.
\newblock Cambridge University Press, Cambridge, 1991.

\bibitem{senin2008dynamic}
P.~Senin.
\newblock Dynamic time warping algorithm review.
\newblock {\em University of Hawaii}, 2008.

\bibitem{taruskin2009music}
R.~Taruskin.
\newblock {\em Music in the Nineteenth Century: The Oxford History of Western
  Music}.
\newblock Oxford University Press, 2009.

\bibitem{Tym06}
D.~Tymoczko.
\newblock The geometry of musical chords.
\newblock {\em Science}, (313):72--74, 2006.

\end{thebibliography}


\end{document}